\newtheorem{lemma}{Lemma}
\newtheorem{theorem}{Theorem}
\newtheorem{example}{Example}
\newtheorem{corollary}{Corollary}
\newtheorem{definition}{Definition}
\def\R{\mathbb R}
\def\C{\mathbb C}
\def\N{\mathbb N}
\def\Z{\mathbb Z}
\newcommand{\suml}{\sum\limits}
\newcommand{\intl}{\int\limits}
\DeclareMathOperator{\Arg}{Arg}
\DeclareMathOperator{\Ker}{Ker}
\definecolor{gray10}{gray}{0.9}
\definecolor{gray20}{gray}{0.8}
\definecolor{gray30}{gray}{0.7}
\definecolor{gray40}{gray}{0.6}
\definecolor{gray50}{gray}{0.6}
\definecolor{gray60}{gray}{0.4}
\definecolor{gray70}{gray}{0.3}
\definecolor{gray80}{gray}{0.2}
\newcommand{\cbox}[1]{\colorbox{#1}{\textcolor{#1}{X}}}
\begin{document}
\selectlanguage{english}
\author{D. Sytnyk, V. Makarov, V. Vasylyk}
\email{sytnik@imath.kiev.ua}
\urladdr{www.imath.kiev.ua/$\sim$sytnik} 
\address{Institute of mathematics, National Academy of Sciences, Ukraine \\ 01601 Ukraine, Kiev-4,
3, Tereschenkivska st.} 
\title{Existence of the solution to a nonlocal-in-time evolutional problem.}

\begin{abstract}

This work is devoted to the study of a nonlocal-in-time evolutional problem for the first order differential equation in Banach space.
Our primary approach, although stems from  the convenient technique based on the reduction of a nonlocal problem to its classical initial value analogue,  uses more advanced analysis.
That is a validation of the correctness in definition of the general solution representation via the Dunford-Cauchy formula.
Such approach allows us to reduce the given existence problem to the problem of locating zeros of a certain entire function.
It results in the necessary and sufficient conditions for the existence of a generalized (mild) solution to the given nonlocal problem. 
Aside of that we also present new sufficient conditions which in the majority of cases generalize existing results. 

\textbf{Keywords:} nonlocal-in-time evolutional problem, unbounded operator coefficient, mild solution, zeros of polynomial.

\end{abstract}
\maketitle

\section{Introduction}
The Cauchy problem for differential equation with a nonlocal in time condition:
\begin{equation}\label{bp}
      \begin{array}{l}
    u'_t+Au=f(t), \quad t \in [0,T]
    \\[1em]
    u(0)+g(t_1,t_2,\ldots, t_k, u) =u_0, \quad 0\leq t_0< t_1<\ldots < t_k \leq T,	
  \end{array}
\end{equation}
with $A$ being a densely defined strongly positive operator (the detailed definition will be given below) and $g:X\rightarrow D(A) \subseteq X$, is one of the important topics in the theory of differential equation theory and its application.
Interest in such problems stems mainly from the better effect of the nonlocal initial condition than the usual one in treating physical problems. 
Actually, the nonlocal initial condition from (\ref{bp}) models many interesting nature phenomena \cite{Allegretto1999}, \cite{Christensen1982}, where the normal initial condition $u(0) = u_0$ may not fit in. In addition some models of the control theory \cite{nonlocal_time_numerVabishchevich1982} and economical management problems may be represented in form (\ref{bp}) as well.

Even in a much simpler form:
\begin{subequations}\label{bp1}
    \begin{equation}\label{bp1:eq}
    u'_t+Au=f(t), \quad t \in [0,T]
    \end{equation}
    \begin{equation}\label{bp1:nc}
    u(0)+\suml_{k=1}^{n}\alpha_k u(t_k) =u_0,\quad 0<t_1<t_2<\ldots < t_n\leq T,
    \end{equation}
\end{subequations}
the model reflects an important case when one is more likely to have  more information about the solution at the times $t = t_1,t_2, \ldots, t_n$ rather than as in the classical case just at $t=0$.
Such situation is common for physical systems where an observer was not able to witness (or measure ) characteristics of the system at the initial time.
It is also intrinsic for modelling certain physical measurements performed repeatedly by the devices having relaxation time comparable to the delay between the measurements.
A simple example of this kind is a recovery of movement captured by a multi-exposure camera (streak-camera).

The particular cases of (\ref{bp1}) covers many well-known physical phenomena such as: problems with periodic conditions
$u(0)-u(t_1)=0$,  problems with Bicadze-Samarskii conditions $u(0)+\alpha_1 u(t_1)=\alpha_2 u(t_2)$, regularized backward problems  etc.
One can not underestimate the role of nonlocal problems having the form \eqref{bp1} in the theory of ill--posed problems, where they appear as natural counterparts of the improperly posed problem in the course of quasi-regularization technique \cite{IllPosedTikhonov1995}.  

Historically the first, to the authors best knowledge, work devoted to the problems with nonlocal-in-time conditions was the work of Dezin \cite{NonlocaAbstractDezin1965}.
In this work author studied the restrictions of abstract differential operators $u'(t)+A$ in a Hilbert space imposed by the non-local condition \eqref{bp1} ( see also \cite{bookDezin1980} and the references therein).
Similar theoretical technique in conjunction with a numerical scheme was used to solve nonlocal-in-space boundary value problem for elliptic partial-differential operators in the work of  Bicadze and Samarskii \cite{NonlocalBitsadze1969}.
Next addition to the theory was made by Gordeziani with co-authors \cite{nonloc_BS_Gordeziani1970}, \cite{nonloc_BS_Gordeziani1972}, 
They  rather generalized the previous results and proposed iterative problem-solving methods. Eigenvalue problems for elliptic operators with nonlocal conditions were considered in \cite{Bandyrskii2006} see also \cite{Sapagovas2012} for review of recent works in that direction.
In \cite{Byszewski1991}, \cite{Byszewski1992}, \cite{NonlocalAbsNonLinJackson1993}  authors stated the sufficient conditions for the existence and uniqueness of a solution to \eqref{bp} in a Banach space.
Initial analysis of \eqref{bp1} used in the present work was performed in \cite{Gavrilyuk2010}. Starting from the conditions similar to those proposed in \cite{Byszewski1992} authors of \cite{Gavrilyuk2010} develop  efficient parallel numerical methods for \eqref{bp1}. They use a rather general methodology developed for classical Cauchy problem for equation \eqref{bp1:eq}
Majority of later theoretical works are mainly devoted to the generalization of results, received in \cite{Byszewski1991}, to more wider classes of equations. 
 
Aside of that, some efforts have been made in the direction of sharpening the sufficient existence and uniqueness conditions for various specific classes of a nonlocal condition from \eqref{bp} and an operator coefficient $A$ \cite{Liang2002}. 

Most of previous research concerning the nonlocal problem \eqref{bp1}  were done under a strict constraint on quantities  $\alpha_i$, from the nonlocal condition \cite{Byszewski1991}, \cite{NonlocalAbsNonLinJackson1993}, \cite{Byszewski1992}:
$$
\suml_{i=1}^{n} |\alpha_i| \leq 1,
$$
allowing the transition from \eqref{bp1} to the classical Cauchy problem.
This inequality obviously gives only a sufficient condition.
Indeed, if one had a solution of ordinary Cauchy problem for equation from (\ref{bp}) he could simply choose such a nonlocal condition that mentioned constrain fails but the solution exist and unique.   

In \cite{NonlocalAbsNonLinNtouyas1997} authors, using spectral characteristics of $A$, obtained  new slightly weaker constraint
\begin{equation}\label{estLiang2002} 
\suml_{i=1}^{n} |\alpha_i|e^{-\rho t_i} \leq 1,
\end{equation}
here $\rho$ has the same meaning as in (\ref{spectrA}). 

Here we will introduce more advanced and yet quite straightforward technique for the treatment of nonlocal problems.
To do that, in section \ref{sec1} we reduce the given problem \eqref{bp1} to the corresponding Cauchy problem  with ordinary initial value condition as in \cite{Byszewski1991}, \cite{NonlocalAbsNonLinNtouyas1997}. 
The subsequent analysis, presented in section \ref{sec2}, study the correctness of the solution operator defined via the Dunford-Cauchy formula on a feasible integration contour.
It is equivalent to the calculation of zeros set for a certain entire function or the polynomial corresponding to it.  
The resulting necessary and sufficient conditions for the existence and uniqueness of the solution take in to account both the nonlocal condition and the spectral information of $A$.
Section \ref{sec3} is devoted to the situation when zeros of the mentioned entire function can not be calculated directly.
That may be caused by a dependence of the nonlocal condition on some additional parameter like in applications of quasi-regularization technique, or because of a large number of time moments $t_k$ in \eqref{bp1}, etc.
Such situation as it will be shown,  can be circumvented using some available zeros estimates \cite{Milovanovic2000}, \cite{Milovanovic2000a} for the mentioned entire function in its reduced to the polynomial form. It results in the sufficient conditions for \eqref{bp1} which nonetheless outperform \eqref{estLiang2002} in many important applications.

\section{Reduction of nonlocal problem to classical Cauchy problem.}  \label{sec1}
From now on we assume that the coefficient  $A$ from \eqref{bp}  is a densely defined strongly positive operator acting in Banach space 
$X \supseteq D(A) \rightarrow X$ \cite{bHilPhil1997}, \cite{bGavrilyuk2011}. 
Its spectrum $\Sigma (A)$ lies in a sector 
\begin{equation}\label{spectrA}
\Sigma = \left\{
z=\rho+r e^{i\varphi}:\quad r \in [0,\infty), \ \left|\varphi\right| \leq \theta < \frac{\pi}{2}, \rho > 0
\right\},
\end{equation}
while the resolvent $R\left(z,A\right)$ of $A$ satisfies the following estimate on the boundary of spectrum $\Gamma_\Sigma$  and outside of it 
\begin{equation}\label{estrez}
\left\|(zI-A)^{-1}\right\|\leq \frac{M}{1+\left|z\right|}, 
\end{equation}
with $\left\| \cdot \right\|$ being an operator norm.


The class of strongly positive operators plays an important role in  applications of functional analysis to the theory of partial differential equations, dynamical systems, numerical analysis, etc.  Strongly--elliptic partial differential operator defined on a bounded Lipschitz domain is a strongly positive operator with spectral parameters that can be estimated from the coefficients of elliptic operator \cite{fujita}, similar is true for a general elliptic pseudo-differential operator. 

%
%
%
The theory of sectorial operators in its present form was developed in the works of Hille, Dunford, Philipps  \cite{b_cp_Fattorini1984}. 
According to this theory every closed strongly-positive operator generates a one parameter semi-group 
$T(t)= e^{-t A}$ 
which acts as a propagator for a solution to \eqref{bp1:eq}.
That is any solution $u(t) \in D(A)$ of the differential equation \eqref{bp1:eq} has the following representation 
\begin{equation}\label{bp1int}
  u(t)=e^{-At}u(0)+\intl_0^t{e^{-A(t-\tau)}f(\tau)}d\tau.
\end{equation}
Recall that the opposite is not true in general \cite{b_cp_Fattorini1984}, since even if $u(t)$ satisfies \eqref{bp1int} it does not need to belong to the domain of $A$. It is also well know that $D(A)$ is dense in X so there always exists a sequence of elements from $D(A)$ converging to $u(t)\in X$ defined by \eqref{bp1int}. 
A function $u(t)$ satisfying \eqref{bp1int} is called a \textit{generalized }\cite[p. 30]{b_cp_Fattorini1984} or sometimes \textit{mild } \cite[p. 117]{arendt} \textit{solution} to problem \eqref{bp1}. Formula \eqref{bp1int} becomes more convenient than the original equation \eqref{bp1:eq} in cases where the classical Cauchy problem for \eqref{bp1:eq} with a given $u(0)$ is considered. In our case of problem \eqref{bp1} that distinction between \eqref{bp1int} and \eqref{bp1:eq} is not so obvious because $u(0)$ is not given directly.

Here we intend to derive a direct representation of the solution to \eqref{bp1}. 
Let us start from representation \eqref{bp1int} and assume the existence of an initial value $u(0)$ such that  $u(t)$ defined by \eqref{bp1int} satisfies nonlocal condition \eqref{bp1:nc} (the precise conditions for the existence of such $u(0)$ will be stated below).

By substituting  the formula for $u(0)$ from \eqref{bp1:nc} into representation \eqref{bp1int} and evaluating the result at  $t_i, i=\overline{1,n}$ we obtain the system of equations 
\begin{equation}
\label{bp1LS}
  \begin{array}{l}
    u(t_i)=e^{-At_i}\left[u_0-\suml_{k=1}^{n}\alpha_k u(t_k) \right]+\intl_0^{t_i}{e^{-A(t_i-\tau)}f(\tau)}d\tau,\\
    i=\overline{1,n}.
  \end{array}
\end{equation}

Next we multiply each part of $i$-th equation by $\alpha_i$ and then sum up the resulting equalities. It gives us the following: 

\begin{equation}
  \label{sum_t_i}
  \begin{split}
  \suml_{i=1}^n \alpha_i u(t_i) =& \suml_{i=1}^n \alpha_i e^{-At_i} u_0-\suml_{i=1}^n \alpha_i e^{-At_i}
  \suml_{k=1}^n \alpha_k u(t_k)+ 
  \\
  &+\suml_{i=1}^n \alpha_i \intl_0^{t_i} e^{-A (t_i-\tau)}f(\tau) d\tau.
    \end{split}
\end{equation}
 Now let us put $\suml_{i=1}^n \alpha_i u(t_i)=w$, then  (\ref{sum_t_i}) can be represented as follows:
 $$
 w = {-\suml_{i=1}^n \alpha_i e^{-At_i} w} +\suml_{i=1}^n \alpha_i e^{-At_i} u_0+ \suml_{i=1}^n \alpha_i \intl_0^{t_i} e^{-A (t_i-\tau)}f(\tau) d\tau,
 $$
 the last equality can be regarded as an operator equation with respect to $w$. Using the notation 
 $B(A)=I +\suml_{i=1}^n \alpha_i e^{-At_i}$, we rewrite it to get
\begin{equation}\label{eqnoneq1}
B w = B u_0 -u_0 +\suml_{i=1}^n \alpha_i \intl_0^{t_i} e^{-A (t_i-\tau)}f(\tau) d\tau.
\end{equation} 
Here unknown $w$ appears only on the left-hand side of \eqref{eqnoneq1}, hence in order  to solve \eqref{eqnoneq1} for $w$ we need to assume the existence and boundedness of the operator valued function $B^{-1}(A)$ inverse to $B(A)$ which we will call reduction operator.

As it will turn out later (see Theorem \ref{thmNCExist}) such assumptions with respect to $B^{-1}(A)$ is the only thing we need to reduce the question about the existence of solution to \eqref{bp1} to the question about the existence of corresponding solution to a classical Cauchy problem associated with \eqref{bp1:eq}.
For the time being let us assume that $B^{-1}(A)$ is a properly defined bounded operator valued function, in that case  
\[ 
 w = u_0- B^{-1} u_0 + B^{-1}\suml_{i=1}^n \alpha_i \intl_0^{t_i} e^{-A (t_i-\tau)}f(\tau) d\tau.
 \]
Our last step is to substitute $u(0) = u_0-w$ \eqref{bp1int}, and finally get a direct representation of solution to \eqref{bp1} free of the unknown values $u(t_k)$:
 \begin{equation}
   \label{bp1IntRed}
    \begin{split}
   u(t)=&e^{-At}\left[B^{-1} u_0 - B^{-1}\suml_{i=1}^n \alpha_i \intl_0^{t_i} e^{-A (t_i-\tau)}f(\tau) d\tau\right] +
   \\
   &+\intl_0^t{e^{-A(t-\tau)}f(\tau)}d\tau.
    \end{split}
 \end{equation}
 
For further analysis of relationship between the solution to \eqref{bp1} and the initial data $\alpha_i, t_i$ we will need a following definition from the operator function calculus \cite[p. 167]{bHilPhil1997},  \cite{Clem}.
\begin{definition}\label{thmDunford}
Let $f(z)$ be a complex valued function  analytic in the neighbourhood of the spectrum $\Sigma(A) \subset \C$ and at the infinity. Suppose that there exist an open set $V \supset \Sigma(A)$ with the boundary $\Gamma$ consisting of a finite number of rectifying Jourdan curves such that $f(z)$ is analytic in $V\cap \Gamma$,  then $f(A)$ can be defined as follows 

\begin{equation}
		\label{reprDunford}
		f(A) x=f(\infty)I+\frac{1}{2\pi i} \intl_{\Gamma}f(z) R(z,A)x dz,
\end{equation}
here we assume that the integral is taken over the positively oriented contour $\Gamma$ which may pass trough $\infty$.
\end{definition}
The integral in \eqref{reprDunford} is called a Dunford-Cauchy integral.

Representation \eqref{reprDunford} once applied to $B^{-1}(A)$ lead us to the formula
\begin{equation}
		\label{reprDelta}
		B^{-1}(A) u=I+\frac{1}{2\pi i}\intl_{\Gamma_A} \frac{1}{1+\sum_{k=1}^n{\alpha_k e^{(-t_k z)}}} R(z,A) u dz,
\end{equation}
from which it is clear that the only possible source of singularities of reduction operator $B^{-1}(z)$ would be a set of zeros of denominator from \eqref{reprDelta}. 
Thus the function $B^{-1}(A)u$ is properly defined in the sense of Definition \ref{thmDunford} if and only if all the zeros of 
\begin{equation}
	  \label{zerosExp}
	 B(z)=1+\sum_{k=1}^n{\alpha_k e^{(-t_k z)}},\quad z \in \mathbb{C}_+,
\end{equation}
belong to a set  $\mathbb{C} \backslash \Sigma $. 
Now we can formalize our previous analysis as a theorem. 
\begin{theorem}\label{thmNCExist}
Let  $A$ be a strongly positive linear operator with the spectral parameters $(\rho, \theta)$, and  $f(t) \in L^1((0;T),X)$ be a given function. Then the generalized solution \eqref{bp1IntRed} exists if and only if the set of zeros $ \Ker(B(z))\equiv\left\{z:B(z)=0, z \in C \right\}$, of $B(z)$ associated with nonlocal condition \eqref{bp1:nc}, satisfy the inclusion 
\begin{equation}\label{zerosB}
\Ker(B(z))\subset \mathbb{C} \backslash \Sigma. 
\end{equation} 
\end{theorem}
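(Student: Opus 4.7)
The plan is to establish that the entire derivation from (\ref{bp1int}) through (\ref{eqnoneq1}) to (\ref{bp1IntRed}) succeeds \emph{exactly} when the reduction operator $B^{-1}(A)$ is well-defined and bounded through the operator calculus of Definition \ref{thmDunford}. The algebraic manipulations leading to (\ref{eqnoneq1}) are unconditional, so existence of the generalized solution in the form (\ref{bp1IntRed}) is equivalent to the unique solvability of (\ref{eqnoneq1}) for $w$, and this in turn amounts to producing $B^{-1}(A)$ as a bounded operator via the Dunford-Cauchy formula.

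For the sufficient direction, assume $\Ker(B(z)) \subset \mathbb{C}\setminus\Sigma$. Since $B(z)$ is entire, its zero set is closed and discrete; combined with the fact that $|B(z)|\to 1$ as $\mathrm{Re}(z)\to+\infty$ (which prevents zero accumulation toward infinity inside the closed sector $\Sigma$), the assumption yields an open neighborhood $V\supset\Sigma$ on which $1/B(z)$ is holomorphic. Since $\Sigma(A)\subseteq\Sigma$, one can select an admissible contour $\Gamma_A\subset V$ surrounding $\Sigma(A)$ in the sense required by Definition \ref{thmDunford}. Convergence of the Dunford-Cauchy integral (\ref{reprDelta}) then follows from the resolvent bound (\ref{estrez}) together with the exponential decay of $1/B(z)-1 = -\sum_{k=1}^{n}\alpha_{k}e^{-t_{k}z}/B(z)$ in the right half-plane. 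Hence $B^{-1}(A)$ is a bounded operator and formula (\ref{bp1IntRed}) indeed defines the generalized solution.

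For the necessary direction, the representation (\ref{bp1IntRed}) is meaningful only when $B^{-1}(A)$ itself is well-defined in the sense of Definition \ref{thmDunford}, which forces $1/B(z)$ to be analytic in some neighborhood of $\Sigma(A)$. Because the theorem's hypotheses fix only the spectral parameters $(\rho,\theta)$, the class of admissible operators exhausts every closed $\Sigma(A)\subseteq\Sigma$, so the analyticity region of $1/B$ must cover $\Sigma$ uniformly. If (\ref{zerosB}) were to fail, one could choose a strongly positive $A$ whose spectrum contains a zero $z_{0}\in\Ker(B)\cap\Sigma$; the spectral mapping theorem would then place $0\in\Sigma(B(A))$, so $B(A)$ could not be boundedly invertible and the transition from (\ref{eqnoneq1}) to (\ref{bp1IntRed}) would collapse.

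The principal technical obstacle I expect is the careful handling of convergence of the Dunford-Cauchy integral at infinity. The naive bound on the integrand is only $O(1/|z|)$ along the unbounded contour $\Gamma_{A}$, which is not integrable on its own. The remedy is to isolate the limit $\lim_{\mathrm{Re}(z)\to+\infty}B^{-1}(z)=1$, absorb it into the $f(\infty)\,I$ term mandated by (\ref{reprDunford}), and use the exponential decay of $1/B(z)-1$ in the right half-plane to secure absolute integrability of the remaining contribution, thereby certifying that $B^{-1}(A)$ is a genuine bounded operator.
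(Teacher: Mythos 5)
Your proposal is correct and rests on the same central mechanism as the paper's proof --- everything is funneled into the invertibility of the reduction operator $B(A)$ via the Dunford--Cauchy formula \eqref{reprDelta}, with invertibility governed by the location of $\Ker(B(z))$ relative to the sector --- but you execute both directions differently, and in each case more carefully, than the paper does. For sufficiency the paper simply asserts that $B^{-1}(A)$ is ``properly defined and bounded'' once \eqref{zerosB} holds; you supply the two missing ingredients, namely that the discreteness of the zero set of the entire function $B$ together with $B(z)\to 1$ as $\Re(z)\to+\infty$ yields a genuine zero-free open neighbourhood $V\supset\Sigma$ (so an admissible contour exists), and that the integral converges only after splitting off $B^{-1}(\infty)=1$ and exploiting the exponential decay of $1/B(z)-1$; the paper's formula \eqref{reprDelta} silently presupposes exactly this regularization, and your remark about the non-integrable $O(1/|z|)$ term is the honest account of why. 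For necessity the routes genuinely diverge: the paper derives the operator equation \eqref{eqnoneq1} from an assumed solution and then asserts that it is solvable ``only if'' \eqref{zerosB} holds, which for a \emph{fixed} $A$ is not literally true since only $\Ker(B)\cap\Sigma(A)$ matters, not all of $\Sigma$; you detect this quantifier gap and repair it by reading the theorem uniformly over the class of operators with spectral parameters $(\rho,\theta)$ and invoking the spectral mapping (inclusion) theorem to exhibit a worst-case $A$ for which $0\in\Sigma(B(A))$. That is a cleaner and more defensible necessity argument than the paper's, at the modest cost of a reinterpretation of the statement and of needing to cite a spectral mapping result for the sectorial calculus. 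The one place where you are slightly more glib than the paper is the closing step of sufficiency: asserting that solvability of \eqref{eqnoneq1} is \emph{equivalent} to existence of the generalized solution still requires verifying that the function \eqref{bp1IntRed} actually satisfies the nonlocal condition \eqref{bp1:nc}; the paper at least records that this follows by direct substitution, and you should do the same.
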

\begin{proof}
\textit{1. Necessary conditions.}

We assume that the solution to \eqref{bp1} exists. 
Then this solution should satisfy formula \eqref{bp1int} (see \cite[ Proposition 3.1.16]{arendt}) which in the homogeneous case $f(t) \equiv 0$ is reduced to 
\begin{equation*}\label{exp_sol_repr}
u(t)=e^{-At}u(0).
\end{equation*}

Once the validity of \eqref{bp1int} (or \eqref{exp_sol_repr}) for $u(t)$ is established we can use it along with \eqref{bp1:nc} to get operator equation \eqref{eqnoneq1}, in a way described above.
This equation, as we already discovered,  has a nontrivial solution only if the inclusion \eqref{zerosB} is valid. 
\newline
\textit{ 2. Sufficient conditions.}
To prove the sufficiency let us focus our attention on function $u(t)$ defined by  \eqref{bp1IntRed}. 
Operator valued function $e^{-A (t-s)}$ from the integrands appearing in \eqref{bp1IntRed} is differentiable so the integrals  are convergent for any $f(t) \in L^1((0;T),X)$, and therefore $u(t)$ is properly defined and bounded once $B^{-1}(A)$ is properly defined. Next from the Dunford-Cauchy integral representation \eqref{reprDelta} we observe that $B^{-1}(A)x$  is properly defined and bounded $\forall x \in X$ as long as the propositions of the theorem regarding the spectrum of $A$ are fulfilled and \eqref{zerosB}  is valid.  
It remains to show that  $u(t)$ defined by  \eqref{bp1IntRed} is indeed a generalized solution to \eqref{bp1}. This can be easily done by substitution of the representation of $u(t)$ into \eqref{bp1:nc}. 
\end{proof}
This theorem answers the question regarding the existence of solution to problem \eqref{bp1}. Such result alone have a limited theoretical and practical utility.  Two supplementary questions regarding the uniqueness of $u(t)$  and its stability with respect to the initial data are need to be studied as well.

We discuss the stability first. In order to show that assumptions stated in Theorem \ref{thmNCExist} imply the continuous dependence of solution $u(t)$ defined by \eqref{bp1IntRed} on $u_0$ $f(t)$ and the parameters of nonlocal condition we recall similar results for classical Cauchy problem.
If the theorem's assumptions regarding $A$ are valid then the classical Cauchy problem associated with equation \eqref{bp1:eq} is \textit{well posed} in the sense of \cite[p. 29]{b_cp_Fattorini1984}. Its solution represented by \eqref{bp1int} exist for any initial state  $u(0) \in X$ and for $u(0) = u_0 - w$ in particular. The last expression continuously depends on the initial data. 

The uniqueness of the solution to \eqref{bp1}  can be justified in similar manner as for the ordinary abstract Cauchy problem associated with equation \eqref{bp1:eq}.
Clearly any function $v(t)$ satisfying \eqref{bp1:eq} can be represented in the form 
$$
v(t)=e^{-At}v(0).
$$
Assume that $u(t)$ is a solution to nonlocal problem \eqref{bp1}  $\forall t\geq 0$  $u(t) \in D(A)$, and consider a function
$$
w(t)=e^{-At}u(0).
$$  
Both $w(t)$, $u(t)$  satisfy differential equation \eqref{bp1:eq}, and so does their difference $p(t)= w(t)-u(t)$. 
The corresponding Cauchy problem is well-posed thus 
it has a unique solution for any initial state $u_0 \in D(A)$ \cite[Theorem 23.8.1]{bHilPhil1997}. 
We take $p(0)=0$ as such state. Then the difference $p(t)\equiv 0$ everywhere since it the only  solution to \eqref{bp1:eq} with the zero initial condition.

\begin{example}\label{ex1}
To demonstrate the application of Theorem \ref{thmNCExist} let us consider the following nonlocal problem 
\begin{equation}\label{exNCP1p}
  \begin{array}{l}
    u'_t+Au=f(t), \quad t \in [0,T]\\[1em]
    u(0)+\alpha_1 u(t_1) =u_0,\quad 0<t_1\leq T.
  \end{array}
\end{equation}
For such nonlocal condition  $B(z)$  will have the form 
$$
B(z) = 1 +\alpha_1e^{-zt_1}.
$$
Its representation permits us to write the set of zeros $\Ker{B(z)}$ in a closed form 
\begin{equation}
\begin{split}\label{kerB1p}
\Ker(B(z)) = & -\frac{1}{t_1}\ln\left(-\frac{1}{\alpha_1}\right)=
\\
= &-\frac{1}{t_1}\left[\ln\left|\frac{1}{\alpha_1}\right|+i\left( \Arg\left(-\frac{1}{\alpha_1}\right)+2\pi m\right) \right], \quad m\in \Z 
\end{split}
\end{equation}
here $\Arg\left(\cdot\right)$ stands for the principal value of argument.  
Assuming that the operator $A$  has the spectral parameters  $(\rho, \theta)$
$$
z=x+iy \in \mathbb{C} \backslash \Sigma \Leftrightarrow |y|>(x-\rho)\tan\theta.
$$
One can observe from the last inequality that if the principal value of logarithm satisfies condition \eqref{kerB1p} the same is true for the entire set of the logarithm values, so one can safely put $m=0$ in \eqref{kerB1p}. 

Condition \eqref{zerosB} for problem \eqref{exNCP1p}, therefore is equivalent to the following inequality 
\begin{equation}\label{estNC1p}
\left|\Arg\left(-\frac{1}{\alpha_1}\right)\right|>\left(\ln\left|{\alpha_1}\right|-t_1\rho\right)\tan\theta.
\end{equation}
Note that unlike \eqref{estLiang2002} or earlier estimates by Byszewski this inequality takes into account both spectral parameters of $A$. Closed form representation for $\Ker{B(z)}$ and the proposition of Theorem \ref{thmNCExist} guarantee that \eqref{estNC1p} forms a necessary and sufficient conditions for the existence of generalized solution to \eqref{exNCP1p}. 

Example \ref{ex1} is instructive in a sense that for such nonlocal problem  one can easily compare conditions \eqref{estLiang2002} and \eqref{estNC1p} graphically. 
This comparison is given on Figure \ref{nc1pFig1} where we depict three sets of admissible values of $\alpha_1 \in \C$ for $t_1=1$. 
\begin{figure}[ht]\label{nc1pFig1}
\vspace*{-3pt}
\centering\includegraphics[width=0.95\linewidth]{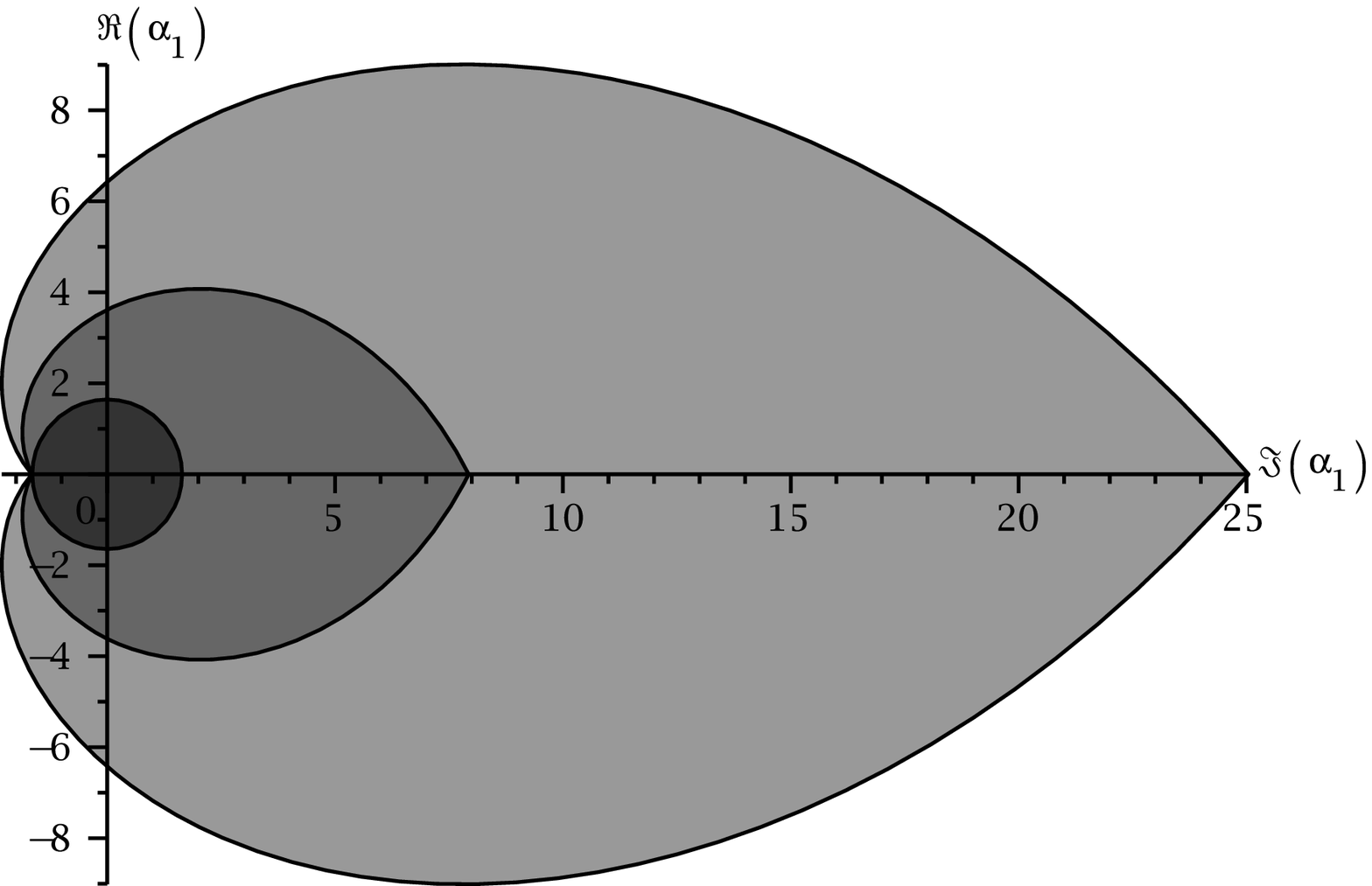}
\vspace*{-3pt}
\caption{Admissible values of parameter $\alpha_1 \in \C$ from nonlocal condition of problem  \eqref{exNCP1p} obtained by: \cbox{gray80} -- estimate  \eqref{estLiang2002} ($\theta=\pi/4$), \cbox{gray60} --  estimate \eqref{estNC1p} ($\theta=\pi/4$) and  \cbox{gray40} by estimate \eqref{estNC1p} ($\theta=\pi/6$). Spectral parameter $\rho=1$ everywhere.}
\end{figure}
Observe that for the operator $A$ with spectral parameters $(1,\pi/4)$ ($\rho=1$, $\theta=\pi/4$) the set of admissible $\alpha_1$ obtained from  \eqref{estNC1p} (interior of the region coloured in \cbox{gray60}) contains in itself as a subset the admissible set obtained by  \eqref{estLiang2002} (coloured in \cbox{gray80}). This set remains the same for the whole family of sectorial operator coefficients with some fixed $\rho$ and $\forall \theta \in [0, \pi/2]$ since \eqref{estLiang2002} are independent of $\theta$. 
While in reality the admissible set grows larger when we make $\theta$ smaller.  Check for example the corresponding set for the case  $\theta=\pi/6$ obtained using \eqref{estNC1p} which is coloured in \cbox{gray40} on Figure \ref{nc1pFig1}.  In the limiting case of $\theta=0$ when $A$ is self-adjoint this set becomes equal to $\C \backslash (-\infty; -e^{t_1\rho})$.  

As partial case of \eqref{exNCP1p} one can consider the problem
with $A = -\frac{d^2}{dx^2} + \pi^2 - 1,$ $D(A)=\left\{v(x) : v \in H^2, \ v(0) = v(1)=0 \right\}$, $\rho = 1$ and \eqref{bp1:nc} in the form $u(0)+2e u(1) = 3\sin(\pi x)$. 
Even thought the solution to such problem exists and has the form $u(x,t) = e^{-t} \sin(\pi x)$ condition \eqref{estLiang2002} fails. 
Meanwhile the approach described above shows that solution to the mentioned problem exists $\forall \alpha_1 \in \C \backslash (-\infty; -e)$. 
\end{example}

To convince the reader that situation complicates when the nonlocal condition consists of more than one value of unknown at the given times we study a problem with two-point nonlocal condition. 

\begin{example}\label{exBS}
Let us consider the problem 
	\begin{equation}\label{exNCP2p}
  \begin{array}{l}
    u'_t+Au=f(t), \quad t \in [0,T],\\[1em]
    u(0)+\alpha_1 u(t_1) + \alpha_2 u(t_2)=u_0,\quad 0<t_1<t_2\leq T.
  \end{array}
\end{equation}

Such nonlocal condition yields the following $B(z)$:  
$$
B(z) = 1 +\alpha_1e^{-zt_1}+\alpha_2e^{-zt_2},
$$
whence it is clear that a closed form representation of $\Ker{B(z)}$ is not available in general. 
The function $B(z)$ remains entire for any fixed $\alpha_1,\alpha_2$, $t_1,t_2$.
So its roots can be accurately approximated numerically using various methods \cite{Henrici1974} (Newtons method and its modifications, gradient methods,  numerical methods based on the argument principle and numerical quadratures, etc.) which implementations are available as a part of many modern mathematical programs (Octave, Maxima, Matlab, Maple).
  
By fixing the values
\begin{equation}\label{par1exNCP2p}
\alpha_1=-0.13,\  \alpha_2=3,\  t_1=1/2,\  t_2=1, 
\end{equation}
we  get
$$
B(z) = 1 -0.13 e^{-\frac{z}{2}}+3e^{-z}.
$$
If we additionally assume that operator $A$ has the spectral parameters  $(0, \theta)$ it becomes obvious that condition \eqref{estLiang2002} is inapplicable in such situation ($0.13+3 >1$).  
The approximate calculation of zeros of $B(z)$ carried by Maple package or, to be more exact, the function \verb#Analytic# (being the implementation of modified Newton's method) gives us  :
$$
\Ker(B(z)) = -2.09255541146 + 4\pi i,
$$ 
here all given digits are significant. 
Combining this information, Theorem \ref{thmNCExist} and the fact that the spectrum of $A$ lies in a right half-plane of $\C$ we conclude that the generalized solution to problem \eqref{exNCP2p} exists for any $\theta \in [0, \pi/2]$.
\end{example}
All in all, the performed numerical analysis will always allow us to clarify the existence of a solution to \eqref{bp1} as long as the nonlocal parameters from \eqref{bp1:nc} are fixed.
For many application of \eqref{bp} with $n>2$ this is not enough as one still would like to have some a priory information about the admissible parameters set rather than simply check the existence of solution for some fixed values  of nonlocal parameters. 
This often happens in applications to control theory where one must guarantee the solution's existence for a certain submanifold in the space of parameter values. 
In the remaining part of the work we propose the technique how to estimate $\Ker{B(z)}$ by means of some well known bounds on roots of polynomial.

\section{Zeros of $B(z)$ and equivalent problem for polynomial}\label{sec2}

At first we assume that all $t_k$ from nonlocal condition \eqref{bp1:nc} are rational numbers.
This assumption in itself is quite adequate in practice because the computer representation of $t_k$ rely on a fixed size mantissa \cite{ieeeKahan1987}.
Every  $t_k$ admits the representation 
\begin{equation*}
t_k= \frac{\lambda_k}{\mu_k}, \quad \lambda_k \in \mathbb{Z},\quad  \mu_k \in \mathbb{N},
\end{equation*}
Next we set $c_k=\frac{Q \lambda_k}{\mu_k}$, with  $Q=\mbox{LCM}(\mu_1,\mu_2, \ldots, \mu_n)$. The function 
$B(z)$ is periodic with period $2 \pi Q i$. Thus, using the arguments from Example \ref{exNCP1p} the set $\mathbb{C} \backslash \Sigma$ can be safely reduced to $D_{Q} \backslash  \Omega_Q$ 
$$
\Omega_Q=\Sigma(A) \cap D_{Q},
$$
here $D_{Q}$ is a strip around the real axis with the width $2\pi Q$ (See Figure \ref{picQSPhi}, a)).

Now we would like to make use of Theorem \ref{thmNCExist}. For that one needs to check whether $\Ker{B(z)} \subset D_{Q} \backslash  \Omega_Q$.  This problem is just as difficult as the corresponding problem for $\C\backslash \Sigma$.
%

A mapping  
\begin{equation}
\label{bp_transf1}
\varphi(z)=\exp(-z/Q)
\end{equation}
transforms  \eqref{zerosExp} into the following form 
\begin{equation}
		\label{zerosPol}
		P(z)=1+\suml_{k=1}^{n}\alpha_k z^{c_k}.
\end{equation}
It is well known \cite{Henrici1974}, that \eqref{zerosExp} is one-to-one conformal mapping of $\Omega_Q$  onto  $\Phi$ (see Figure \ref{picQSPhi} b)).
\begin{figure}[ht]
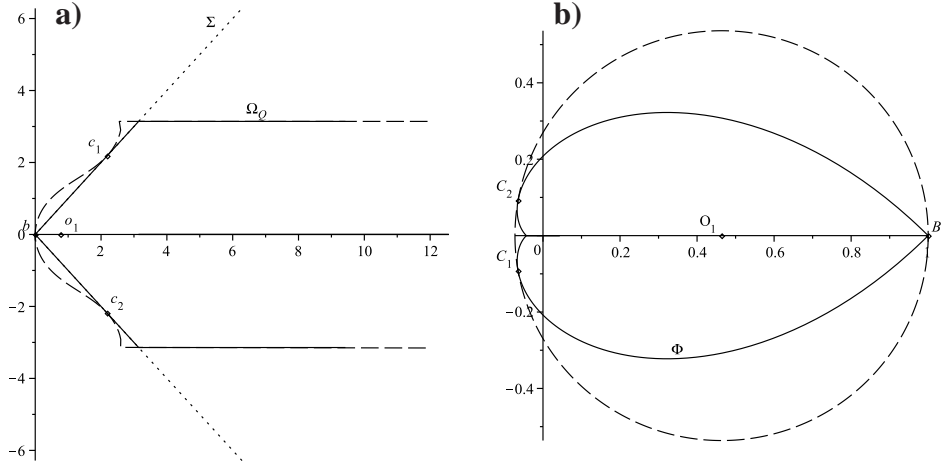
%
\vspace*{-3pt}
\begin{tabular}{lr}
		\begin{overpic}[width=0.49\linewidth]%
		{inscribecircle_t=0_th=pi_4}
			\put(15,95){\textbf{a)}}
		\end{overpic}&
		\begin{overpic}[width=0.49\linewidth]%
		{circumcircle_t=0_th=pi_4}
			\put(15,95){\textbf{b)}}
		\end{overpic}
				\\
\end{tabular}
\vspace*{-3pt}
\caption{The regions $\Sigma$, $D_Q$ and their images for $A$ with spectral parameters $\theta=\pi/4$, $\rho=0$ ($Q=1$):  
 \textbf{a)} intersection of spectrum  $\Sigma$  and the set $D_Q$;
 \textbf{b)} the region $\Phi$ and its encompassing circle. Points $c_{1/2},b,o_1$ are the preimages of $C_{1/2},B,O_1$.}  
\label{picQSPhi}
\end{figure}
By using it we achieved two goals. First of all the selected mapping transforms the entire function $B(z)$ into polynomial $P(z)$ with real coefficients (provided that all $\alpha_i$ are real). Secondly this mapping reduces the zeros finding problem for the exterior of $\Sigma$ to the same problem for the exterior of a bounded set $\Phi \subset \C$. 
Or speaking more precisely the conditions guarantying that all roots of $P(z)$ lie outside $\Phi$ or equivalently $\Ker{B(z)} \subset D_{Q} \backslash  \Omega_Q$ would be necessary and sufficient to prove that existence and uniquenesses of solution \eqref{bp1IntRed}. 
The majority of results related to such conditions for polynomials are devoted to the situation when a circle is considered in place of  $\Phi$
(to review existent results in that field see \cite{Milovanovic2000a, Milovanovic2000}, as well as \cite{Henrici1974, Kravanja2000}). 

That is why we first encircle $\Phi$ and then use readily available zero-free conditions for that circle. Such approach will make the resulting conditions only sufficient for all $\theta \in [0,\pi/2)$ except for the limiting case $\theta = \pi/2$ when $\Phi$ is a circle by construction.  

For any given operator $A$ with spectral parameters $(\rho,\theta)$ the boundary of $\Phi$ can be parametrized as follows
$$
\partial \Phi = \left\{\exp\left(\frac{-Z(x)}{Q}\right): x\in[0,+\infty]\right\},
$$
where $Z(x)$ is a parametrization of $\partial\Omega_Q$:
$$
Z(x)=\rho+x+i \cdot\left\{
\begin{array}{ll}
	x \tan{\theta},&\quad x\tan\theta< Q\pi,\\
	Q\pi,&\quad x\tan\theta\geq Q\pi.\\
\end{array}
\right.
$$
A closer look at the expression for $\partial \Phi$ unveils that a vertical linear diameter of $\Phi$ is proportional to the magnitude of spectral angle, and the horizontal diameter of $\Phi$ is reversely proportional to $\rho$. 
This observation suggests us to describe the encompassing circle as a circumcircle of a triangle with the vertices 
$$
B=\max_{z \in \partial \Phi}\Re(z)+0i=\exp(-\rho/Q),
$$
and  $C_{1/2} \in \partial \Phi $ which are symmetric with respect to the real axis. 
The coordinates of $C_{1}$ are chosen to maximize the 
distance $|O_1-B|$ under the constrain $|O_1-B|^2=|O_1-C_i|^2$,  here $O_1$ is a circumcentre of $\triangle_{BC_1C_2}$. 
Using the definition of $Z(x)$, \eqref{bp_transf1} and some basic facts from calculus we reduce the mentioned maximization problem to the following equation
\begin{equation}\label{eq_maxdist}
\begin{split}
\exp\left(-\frac{2x}{Q}\right)\left[\cos\left(\frac{x\tan\theta}{Q}\right)-\tan(\theta)\sin\left(\frac{x\tan\theta}{Q}\right)\right]+
\\
 +\cos\left(\frac{x\tan\theta}{Q}\right)+\tan(\theta)\sin\left(\frac{x\tan\theta}{Q}\right)
=  2\exp\left(-\frac{x}{Q}\right).
\end{split}
\end{equation}
It has a positive solution for $Q\in \N$ and $\forall \theta \in [0,\pi/2]$. Assume that $x_d$ is a solution of \eqref{eq_maxdist}, then
$$
C_{1/2}=\varphi(\rho+x_d \pm i x_d \tan\theta),
\quad
O_1= 
 \frac{\varphi(2\rho)-\Re(C_1)^2-\Im(C_1)^2}{2\left(\varphi(\rho)-\Re(C_1)\right)},
$$
while the radius of circumcircle $r=\varphi(\rho)-O_1$ (The picture of $\Phi$, its encompassing circle along with their inverse images  are shown in Figure \ref{picQSPhi}  ).


\section{Sufficient conditions for existence of solution}\label{sec3}

Let us review what we have done so far. Starting from Theorem \ref{thmNCExist} we reduce the problem of clarifying whether $\Ker{B(z)} \subset \C \backslash \Sigma$  to the corresponding problem for zeros of the polynomial $P(z)$ lying in the exterior of the circle $|z-O_1|\leq r$. Conditions guarantying such layout of roots \cite{Milovanovic2000,Milovanovic2000a} are obtained, as a rule, from equivalent conditions for the interior of the circle. That is why most of the related results are formulated for the circle with centre at origin. Some of them in addition operate with the unit circle only. To accommodate this observation we introduce two alternative forms of \eqref{zerosPol}:
with the given circle transformed to the unit circle centered at the origin
		\begin{equation}
			P_1(z')= P(O_1+rz')=\suml_{k=0}^{c_n} \alpha'_k z'^k,
		\label{zerosPol1}
		\end{equation}  
and with the given circle transformed to the circle centered at the origin		
		\begin{equation}
			P_2(z'')=P(O_1+z'')=\suml_{k=0}^{c_n} \alpha''_k z''^k.
		\label{zerosPol2}
		\end{equation}		
At this point we need to make use of several results estimating the radius of zero-free circle in terms of the polynomial coefficients. First of these results is a so-called Schur–Cohn test \cite{Henrici1974}. It establishes the necessary and sufficient conditions for the roots of polynomial to lie in the region $|z|>1$. 
\begin{definition}
Given $P^\star(z)=z^n\overline{P(1/\overline{z})}$ we define  the Schur transform $T$ of polynomial $P(z)$ by
$$
\begin{array}{rl}
TP(z):=&\overline{\alpha_0}P(z)-\alpha_n P^\star(z)\\
=& \suml_{k=0}^{n-1} (\overline{\alpha_0} \alpha_k - \alpha_n \overline{\alpha_{n-k}})z^k.
\end{array}
$$
\end{definition}
\begin{theorem}[{\cite{Henrici1974}}]\label{tShur}
Let $P(z)$ is a polynomial of degree $n>0$. All zeros of  $P(z)$ lie in the exterior of the circle $|z| \leq 1$ if and only if for all $k=1,2, \ldots, n$
\begin{equation}
	\gamma_k > 0, 
	\label{shurCrit}
\end{equation}
where $\gamma_k := T^k P(0)$ and $T^k P= T(T^{k-1} P)$. 
\end{theorem}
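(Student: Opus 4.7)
The plan is to prove this classical Schur--Cohn criterion by induction on the degree $n$, with the main engine being a Rouch\'e-type lemma that compares the zero distribution of $P$ with that of its Schur transform $TP$. The key observation is that although the Schur transform is defined algebraically, it has a transparent analytic meaning: on the unit circle $|z|=1$ one has $1/\overline{z}=z$, hence $|P^\star(z)|=|z^n\,\overline{P(1/\overline z)}|=|P(z)|$. This equality is the only nontrivial analytic ingredient needed.

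First I would prove the auxiliary lemma: \emph{if $\gamma_1=|\alpha_0|^2-|\alpha_n|^2>0$, then $P$ and $TP$ have the same number of zeros in the open disk $|z|<1$, and in particular neither has zeros on $|z|=1$.} Indeed, $|\alpha_n|<|\alpha_0|$ combined with $|P^\star(z)|=|P(z)|$ on $|z|=1$ yields
\begin{equation*}
|\alpha_n P^\star(z)|<|\overline{\alpha_0}\,P(z)|,\qquad |z|=1.
\end{equation*}
By Rouch\'e's theorem applied to $\overline{\alpha_0}P$ and $-\alpha_n P^\star$, their sum $TP$ has the same number of zeros inside $|z|<1$ as $\overline{\alpha_0}P$, which has the same zero count as $P$. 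The same inequality shows neither $P$ nor $TP$ vanishes on $|z|=1$.

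Next I would record the necessity of $\gamma_1>0$: if every root $z_i$ of $P$ satisfies $|z_i|>1$, then Vieta's formula gives $\prod_i|z_i|=|\alpha_0|/|\alpha_n|>1$, so $|\alpha_0|>|\alpha_n|$. With the lemma in hand, the induction is straightforward. Base case $n=1$: $P(z)=\alpha_0+\alpha_1 z$ has its only root outside the closed unit disk iff $|\alpha_0|>|\alpha_1|$, i.e.\ iff $\gamma_1>0$. For the inductive step, assume the result for all polynomials of degree less than $n$. Then $P$ has all zeros in $|z|>1$ iff $\gamma_1>0$ \emph{and} $TP$ has all zeros in $|z|>1$; since $\deg TP\le n-1$, the induction hypothesis converts the latter into positivity of $T^k(TP)(0)=\gamma_{k+1}$ for $k=1,\dots,n-1$, so the full list $\gamma_1,\dots,\gamma_n$ is positive.

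The step I expect to be the main obstacle is the handling of an anomalous degree drop: the coefficient of $z^{n-1}$ in $TP$ equals $\overline{\alpha_0}\alpha_{n-1}-\alpha_n\overline{\alpha_1}$ and may vanish, so $TP$ can have degree strictly less than $n-1$, and iterating $T$ may cascade this drop. I would address this by showing that the lemma remains valid as long as $\gamma_1>0$ (the Rouch\'e argument uses only $|\alpha_0|>|\alpha_n|$, not any assumption on intermediate coefficients), and by verifying that once the iterated Schur transform becomes a nonzero constant $c$, it has no zeros in $\mathbb C$ at all, so the remaining positivity conditions $\gamma_k=c>0$ reduce to a single sign condition that is built into the hypothesis. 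The cleanest bookkeeping is to phrase the induction on the pair (degree, number of remaining transforms) and to treat identically vanishing $T^kP$ as a degenerate case in which $P$ itself must possess roots on $|z|=1$, contradicting the strict inequality in the conclusion.
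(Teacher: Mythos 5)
The paper itself offers no proof of this theorem; it is quoted from Henrici \cite{Henrici1974}, so your argument can only be judged on its own merits. Your overall strategy --- the identity $|P^\star(z)|=|P(z)|$ on $|z|=1$, Rouch\'e's theorem comparing $\overline{\alpha_0}P$ with $TP$, Vieta for the necessity of $\gamma_1>0$, and induction on the degree --- is indeed the standard route to the Schur--Cohn test. But your auxiliary lemma contains a false clause that leaves a genuine gap. You claim that $\gamma_1>0$ alone forces $P$ and $TP$ to be zero-free on $|z|=1$. Take $P(z)=(z-1)(z-3)=z^2-4z+3$: here $\gamma_1=|\alpha_0|^2-|\alpha_2|^2=9-1=8>0$, yet $P(1)=0$. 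At such a point the inequality $|\alpha_n P^\star(z)|<|\overline{\alpha_0}P(z)|$ degenerates to $0<0$, so the strict bound on which Rouch\'e rests fails on the contour and the lemma, as stated, cannot be applied. This matters precisely in the sufficiency direction of your inductive step: from ``$\gamma_1>0$ and $TP$ has all zeros in $|z|>1$'' you invoke the lemma to conclude that $P$ has no zeros in $|z|\le 1$, but you have not excluded zeros of $P$ on the circle, which is exactly what the theorem's conclusion (exterior of the \emph{closed} disk) requires.

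The missing observation is that circle zeros propagate through the Schur transform: for $|z_0|=1$ one has $TP(z_0)=\overline{\alpha_0}P(z_0)-\alpha_n z_0^{\,n}\overline{P(z_0)}$, so $P(z_0)=0$ implies $TP(z_0)=0$, and by iteration $T^kP(z_0)=0$ for every $k$; since $T^nP$ is the constant $\gamma_n$, a zero of $P$ on $|z|=1$ forces $\gamma_n=0$, which the hypothesis $\gamma_n>0$ excludes. Equivalently, in the inductive step a circle zero of $P$ would be a circle zero of $TP$, contradicting the assumption that $TP$ has all zeros in $|z|>1$. Once circle zeros are ruled out this way, your Rouch\'e application is legitimate and the induction closes. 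A secondary loose end is the degree-drop bookkeeping you flag yourself: if $\deg TP=m<n-1$, the inductive hypothesis only delivers $\gamma_2,\dots,\gamma_{m+1}$, and you must fix a convention (apply $T$ with respect to the formal degree $n-k$, so that a vanishing leading coefficient gives $T Q=\overline{\beta_0}Q$ and hence $\gamma_{k+1}=\gamma_k^2>0$ automatically) to account for the remaining conditions $\gamma_{m+2},\dots,\gamma_n$; your sketch ``$\gamma_k=c>0$'' is not the correct value and would need this repair.
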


\begin{corollary}
If the coefficients  $\alpha_k$ satisfy
$$
\alpha_0\geq \alpha_{1}\geq \ldots \geq \alpha_{n-1} \geq \alpha_n > 0,
$$
then all zeros of $P(z)$ lie outside the circle {$|z|\leq 1$}.
\end{corollary}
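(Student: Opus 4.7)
The plan is to verify the Schur-Cohn criterion of Theorem \ref{tShur} by induction on the degree, exploiting the fact that the monotone decreasing positive structure of the coefficients turns out to be invariant under the Schur transform $T$. First I would compute, using that the $\alpha_k$ are real and positive,
\begin{equation*}
P^\star(z) = \sum_{k=0}^n \alpha_{n-k} z^k, \qquad TP(z) = \sum_{k=0}^{n-1} \beta_k z^k, \quad \beta_k = \alpha_0 \alpha_k - \alpha_n \alpha_{n-k}.
\end{equation*}
The coefficient of $z^n$ in $\alpha_0 P - \alpha_n P^\star$ is $\alpha_0 \alpha_n - \alpha_n \alpha_0 = 0$, confirming that $\deg TP = n-1$ as demanded by the inductive setup.

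The central lemma I would then establish is: if $\alpha_0 \geq \alpha_1 \geq \ldots \geq \alpha_n > 0$, then also $\beta_0 \geq \beta_1 \geq \ldots \geq \beta_{n-1} > 0$. Monotonicity is immediate from the identity
\begin{equation*}
\beta_k - \beta_{k+1} = \alpha_0(\alpha_k - \alpha_{k+1}) + \alpha_n(\alpha_{n-k-1} - \alpha_{n-k}),
\end{equation*}
since both summands are nonnegative by hypothesis; positivity of the tail coefficient follows from $\beta_{n-1} = \alpha_0 \alpha_{n-1} - \alpha_n \alpha_1 \geq \alpha_1 \alpha_{n-1} - \alpha_n \alpha_1 = \alpha_1(\alpha_{n-1} - \alpha_n) \geq 0$, using $\alpha_0 \geq \alpha_1$ and $\alpha_{n-1} \geq \alpha_n$.

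Iterating this lemma, every Schur iterate $T^k P$ inherits the positive decreasing-coefficient structure, so $\gamma_k = (T^k P)(0)$ is positive for each $k = 1, \ldots, n$, and Theorem \ref{tShur} delivers the corollary. The main point requiring care — more a subtlety than an obstacle — is the degenerate equality case $\alpha_0 = \alpha_1 = \ldots = \alpha_n$, for which $P(z) = \alpha_0(1 + z + \ldots + z^n)$ has zeros on $|z| = 1$ and so is not strictly exterior to the unit disc; to make the monotonicity chain strict at each level of the induction, one needs at least one strict inequality in the initial hypothesis, and then checks that it is transmitted to the $\beta_k$ via the same identity above.
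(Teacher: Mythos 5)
The paper offers no proof of this corollary (it is the reciprocal form of the Enestr\"om--Kakeya theorem, stated as an immediate consequence of Theorem \ref{tShur}), so there is nothing of the authors' to compare against. Your route --- showing that the ``positive, non-increasing'' coefficient pattern is invariant under the Schur transform and then reading off $\gamma_k = T^kP(0)>0$ --- is a legitimate way to derive the statement from Theorem \ref{tShur}, and your computations of $P^\star$, of $TP$, and of the difference $\beta_k-\beta_{k+1}$ are all correct. You are also right to be suspicious of the strict conclusion; the problem is that your diagnosis of where it fails, and the repair you propose, are both incomplete.

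The gap is the passage from non-strict to strict inequalities. Your chain for the tail coefficient ends in $\beta_{n-1}\geq 0$, not $\beta_{n-1}>0$, and likewise $\gamma_1=\beta_0=\alpha_0^2-\alpha_n^2$ is only nonnegative under the stated hypothesis; since Theorem \ref{tShur} demands strict positivity of every $\gamma_k$, the induction as set up does not close. More seriously, requiring ``at least one strict inequality'' among the $\alpha_k$ does not rescue the statement: take $P(z)=2+2z+z^2+z^3=(1+z)(z^2+2)$, whose coefficients satisfy $2\geq 2\geq 1\geq 1>0$ with a strict drop in the middle, yet $P(-1)=0$, so $P$ has a zero on $|z|=1$ and the corollary fails. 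Tracing your induction on this example shows exactly where it breaks: $TP=3+3z+0\cdot z^2$, so the tail coefficient has already degenerated to zero (and the formal degree no longer matches the actual one), the next relevant iterate is the constant-coefficient polynomial $9+9z$, and finally $\gamma_3=0$. In other words, strictness at one position is not transmitted to the position where it is needed. What is actually true under the hypothesis as written is only the non-strict conclusion $|z|\geq 1$. To obtain the strict exterior claimed by the corollary --- and to make your induction run --- one should assume $\alpha_0>\alpha_1>\ldots>\alpha_n>0$: then every gap $\beta_k-\beta_{k+1}=\alpha_0(\alpha_k-\alpha_{k+1})+\alpha_n(\alpha_{n-k-1}-\alpha_{n-k})$ is strictly positive, $\beta_{n-1}>\alpha_1(\alpha_{n-1}-\alpha_n)>0$ so the degree drops by exactly one at each step, and each $\gamma_k$, being the largest coefficient of $T^kP$, is strictly positive, after which your argument is complete.
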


The application Schur-Cohn test to $P_1(z)$ produces a system of $c_n$-th innequalities for the coefficients of nonlocal condition \eqref{bp1:nc} which is sufficient for the solution's existence. 
Next four Lemmas are more convenient than Theorem \ref{tShur} from the computational standpoint since the number of the produced inequalities are independent on the polynomial degree.    
{
\begin{lemma}\label{lem_zpol_1}
All zeros of $P(z)$  lie in th region 
$$
|z|\geq \frac{|\alpha_0|}{|\alpha_0|+M},
$$
where $M=\max\limits_{1\leq k \leq n}|\alpha_k|$.\\
\end{lemma}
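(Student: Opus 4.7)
The plan is to use a direct Cauchy-type lower bound argument: isolate the constant term in $P(z_0)=0$, apply the triangle inequality, and then dominate the tail by a geometric series. This avoids any machinery and only relies on elementary estimates.

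First, I would let $z_0$ be any zero of $P(z)=\sum_{k=0}^{n}\alpha_k z^k$ and rewrite $P(z_0)=0$ as
\[
\alpha_0 = -\sum_{k=1}^{n}\alpha_k z_0^k.
\]
Applying the triangle inequality and using the definition of $M$, this yields
\[
|\alpha_0| \leq \sum_{k=1}^{n}|\alpha_k|\,|z_0|^k \leq M\sum_{k=1}^{n}|z_0|^k.
\]

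Next, I would split into two cases according to whether $|z_0|\geq 1$ or $|z_0|<1$. In the first case the claim is immediate because the proposed lower bound $|\alpha_0|/(|\alpha_0|+M)$ is strictly less than one. In the second case I would extend the finite geometric sum to its infinite counterpart to get the strict inequality
\[
\sum_{k=1}^{n}|z_0|^k < \frac{|z_0|}{1-|z_0|},
\]
so that $|\alpha_0|(1-|z_0|) \leq M|z_0|$. Rearranging gives $|\alpha_0|\leq |z_0|\bigl(|\alpha_0|+M\bigr)$, which is exactly the claimed bound.

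There is no serious obstacle here: the argument is essentially three lines of inequalities. The only point requiring a small amount of care is treating $|z_0|\geq 1$ separately, since the geometric-series bound is only useful when $|z_0|<1$; but as noted, that case is trivial because the right-hand side of the asserted inequality never exceeds $1/2$. Care should also be taken that the result is stated correctly even when $\alpha_0=0$, in which case the bound degenerates to $|z|\geq 0$ and holds vacuously.
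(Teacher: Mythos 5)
Your argument is correct. Note, though, that the paper itself offers no proof of this lemma: it is imported from the surveys of Milovanovi\'c and Rassias, and the authors remark only that such zero-free--disc estimates are ``derived by inversion of the corresponding polynomial zero bounds,'' i.e.\ by applying the classical Cauchy upper bound $|w|\le 1+\max_{k<n}|\alpha_k|/|\alpha_n|$ to the reversed polynomial $z^nP(1/z)$ and translating back. Your direct route --- isolating $\alpha_0$, applying the triangle inequality, and summing the geometric series for $|z_0|<1$ --- is in substance the same computation that proves Cauchy's bound, so you have simply unfolded the inversion into a self-contained three-line estimate; that is a perfectly good, arguably cleaner, way to present it. Two small points of hygiene: your closing claim that the right-hand side $|\alpha_0|/(|\alpha_0|+M)$ ``never exceeds $1/2$'' is false (take $|\alpha_0|>M$); what you actually need, and what you correctly stated earlier, is only that it is strictly less than $1$, which disposes of the case $|z_0|\ge 1$. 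Also, the strict inequality $\sum_{k=1}^{n}|z_0|^k<|z_0|/(1-|z_0|)$ fails (as an equality $0=0$) when $z_0=0$, but as you observe that case forces $\alpha_0=0$ and the bound is vacuous, so nothing is lost.
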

\begin{lemma}\label{lem_zpol_2}
All zeros of $P(z)$ satisfy the inequality 
$$
|z|\geq \frac{|\alpha_0|}{\left[|\alpha_0|+M^q \right]^{1/q}},\quad M=\left(\sum\limits_{k=1}^n |\alpha_k|^p\right)^{1/p},\quad 
p,\ q \in \mathbb{R}_+,\quad 
\frac{1}{p}+\frac{1}{q}=1
$$
\end{lemma}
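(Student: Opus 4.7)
The plan is to exploit the identity $\alpha_0 = -\sum_{k=1}^{n}\alpha_k z^k$, valid whenever $P(z)=0$, and to decouple the coefficients from the powers of $z$ by H\"older's inequality. First I would dispose of the case $|z|\ge 1$, where the desired bound is trivially satisfied since its right-hand side is at most $1$. For the remaining case $|z|<1$, taking moduli in the displayed identity and applying H\"older's inequality with conjugate exponents $p$ and $q$ yields
$$
|\alpha_0| \le \sum_{k=1}^{n}|\alpha_k|\,|z|^k \le \left(\sum_{k=1}^{n}|\alpha_k|^p\right)^{1/p}\left(\sum_{k=1}^{n}|z|^{kq}\right)^{1/q} = M\left(\sum_{k=1}^{n}|z|^{kq}\right)^{1/q}.
$$

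Since $|z|^q<1$, the next step is to bound the finite geometric sum by its infinite completion,
$$
\sum_{k=1}^{n}|z|^{kq}\le \sum_{k=1}^{\infty}|z|^{kq}=\frac{|z|^q}{1-|z|^q},
$$
which converts the previous estimate into the closed-form inequality $|\alpha_0|(1-|z|^q)^{1/q}\le M\,|z|$. Raising to the $q$-th power and isolating $|z|^q$ yields $|z|^q\ge |\alpha_0|^q/(|\alpha_0|^q+M^q)$, and extracting a $q$-th root produces the desired lower bound on $|z|$.

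The argument is a textbook Cauchy-type application of H\"older and contains no genuine obstacle; the only points requiring minor care are the preliminary reduction to $|z|<1$ (needed so that the geometric series converges), and the endpoint case $p=\infty$, $q=1$, where H\"older degenerates to the crude $\ell^{1}$--$\ell^{\infty}$ bound $\sum|\alpha_k||z|^k\le(\max_k|\alpha_k|)\sum|z|^k$ and the argument recovers Lemma \ref{lem_zpol_1} as a limiting case. It is also worth verifying as a consistency check that specialising the resulting denominator to $q=1$ returns precisely $|\alpha_0|+M$, reproducing the preceding lemma exactly.
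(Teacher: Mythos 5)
Your argument is correct and, notably, more than the paper itself provides: in the source, Lemmas \ref{lem_zpol_1}--\ref{lem_zpol_4} are stated without proof as inversions of known polynomial zero bounds drawn from the Milovanovi\'c--Rassias surveys (this one is the classical Kuniyeda--Montel--type $L^p$ bound applied to the reciprocal polynomial $z^nP(1/z)$). Your H\"older-plus-geometric-series computation is exactly the standard derivation of that bound, carried out directly at the small-modulus end rather than via the reversal $z\mapsto 1/z$; the reduction to $|z|<1$, the completion of the finite geometric sum, the algebra isolating $|z|^q$, and the $q=1$ consistency check against Lemma \ref{lem_zpol_1} are all sound. One discrepancy should be flagged: what you actually prove is $|z|\ge |\alpha_0|/\left[|\alpha_0|^q+M^q\right]^{1/q}$, whereas the lemma as printed has $\left[|\alpha_0|+M^q\right]^{1/q}$ with $|\alpha_0|$ to the first power. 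The printed form is not equivalent and is false for general $\alpha_0$ (take $P(z)=5+2z^2$ with $p=q=2$: the zeros have modulus $\sqrt{5/2}\approx 1.58$, while the printed bound claims $5/3\approx 1.67$); your version is the correct classical statement, and the two coincide in the paper's actual application, where $\alpha_0=1$. Relatedly, your preliminary observation that the right-hand side is at most $1$ when $|z|\ge 1$ holds for your (correct) denominator $\left[|\alpha_0|^q+M^q\right]^{1/q}\ge|\alpha_0|$, but not for the printed one, so the proof as written establishes the corrected statement rather than the literal one.
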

}{
Next estimate is due to M. Fujiwara \cite{Fujiwara1916}. 
It is an optimal homogeneous estimate  in the space of polynomials \cite{Batra2004}:
\begin{lemma}\label{lem_zpol_3}
All zeros of  $P(z)$ belong to the region 
$$
|z|\geq \frac{1}{2}\min\limits_{\alpha_i\neq 0 }\left\{\left|\frac{\alpha_0}{\alpha_1}\right|, \left|\frac{\alpha_0}{\alpha_2}\right|^{1/2}, \ldots, \left|\frac{\alpha_0}{\alpha_{n-1}}\right|^{1/(n-1)}, \left|\frac{2\alpha_0}{\alpha_n}\right|^{1/n}\right\}. 
$$
\end{lemma}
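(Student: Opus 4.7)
The plan is to proceed by contrapositive: if $z_0\in\C$ satisfies $|z_0|<\mu$, where $\mu$ denotes the right-hand side of the claimed inequality, then $P(z_0)\neq 0$. If $\alpha_0=0$ then $\mu=0$ and there is nothing to prove, so I may assume $\alpha_0\neq 0$. Rewriting $P(z_0)=0$ as $\alpha_0=-\sum_{k=1}^{n}\alpha_k z_0^k$ gives the triangle-inequality bound $|\alpha_0|\le\sum_{k=1}^{n}|\alpha_k|\,|z_0|^k$, and the objective becomes to show that $|z_0|<\mu$ forces the right-hand side to be strictly smaller than $|\alpha_0|$.

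First I would translate the assumption $|z_0|<\mu$ term by term. For each index $k\in\{1,\dots,n-1\}$ with $\alpha_k\neq 0$, the definition of $\mu$ yields $|z_0|<\tfrac{1}{2}\bigl|\alpha_0/\alpha_k\bigr|^{1/k}$, hence $|\alpha_k|\,|z_0|^k<2^{-k}|\alpha_0|$. For the top index $k=n$, the corresponding estimate $|z_0|<\tfrac{1}{2}\bigl|2\alpha_0/\alpha_n\bigr|^{1/n}$ gives the slightly weaker $|\alpha_n|\,|z_0|^n<2^{-(n-1)}|\alpha_0|$; the extra factor $2$ inside the $n$-th term of the minimum is there precisely so that the resulting geometric series telescopes to $1$. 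Indices $k\ge 1$ with $\alpha_k=0$ contribute nothing and can be dropped, which is the reason the minimum in the statement runs only over non-vanishing $\alpha_i$.

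Summing these estimates over $k$ I would obtain
\[
\sum_{k=1}^{n}|\alpha_k|\,|z_0|^k<|\alpha_0|\Bigl(\sum_{k=1}^{n-1}2^{-k}+2^{-(n-1)}\Bigr)=|\alpha_0|\bigl((1-2^{-(n-1)})+2^{-(n-1)}\bigr)=|\alpha_0|,
\]
which contradicts the triangle-inequality bound $|\alpha_0|\le\sum_{k=1}^{n}|\alpha_k|\,|z_0|^k$ and completes the argument.

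Conceptually this is just Fujiwara's classical upper bound for the moduli of zeros applied to the reciprocal polynomial $P^{\star}(z)=z^n\overline{P(1/\bar z)}$, whose zeros are the reciprocals of those of $P$; I could equally well cite that form of the result and perform the reciprocal substitution. The only step that requires genuine care is the asymmetric treatment of the endpoints of the sum: the partial sum $\sum_{k=1}^{n-1}2^{-k}=1-2^{-(n-1)}$ leaves a deficit of exactly $2^{-(n-1)}$ which is matched by the $n$-th term precisely because of the factor $2$ built into $|2\alpha_0/\alpha_n|^{1/n}$. This is the feature that makes the estimate sharp within the class of homogeneous bounds of this form, as recorded in \cite{Batra2004}.
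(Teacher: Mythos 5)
Your proof is correct. The paper itself offers no proof of this lemma --- it is stated as Fujiwara's classical bound with a citation to \cite{Fujiwara1916} (and to \cite{Batra2004} for optimality) --- and your argument is exactly the standard derivation behind that citation: pass to the reciprocal polynomial, or equivalently bound $|\alpha_0|\le\sum_{k\ge1}|\alpha_k||z_0|^k$ and observe that $|z_0|<\mu$ makes each term at most $2^{-k}|\alpha_0|$ (respectively $2^{-(n-1)}|\alpha_0|$ for $k=n$), so the geometric sum closes up to exactly $|\alpha_0|$ and yields a contradiction. The edge cases ($\alpha_0=0$, vanishing intermediate coefficients, the asymmetric factor $2$ in the top term) are all handled correctly, so this is a complete self-contained proof of the result the paper merely quotes.
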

}  
The last of the estimates given here was proved by H.~Linden.
This estimate in its original form gives bounds on the real and imaginary part of zeros separately. It has been adapted to fit within the framework studied here. 
\begin{lemma}\label{lem_zpol_4}
All  zeros of $P(z)$ belong to the region  $|z|\geq \max\{V_1^{-1},V_2^{-1}\}$, where
$$
V_1=\cos{\frac{\pi}{n+1}}+\frac{|\alpha_n|}{2|\alpha_0|}\left(\left|\frac{\alpha_1}{\alpha_n}\right|+\sqrt{1+\suml_{k=1}^{n-1}\left|\frac{\alpha_k}{\alpha_n}\right|^2}\right),
$$
$$
V_2=\frac{1}{2}\left(\left|\frac{\alpha_1}{\alpha_0}\right|+\cos\frac{\pi}{n}\right)+
\frac{1}{2}\left[\left(\left|\frac{\alpha_1}{\alpha_0}\right|-\cos\frac{\pi}{n}\right)^2+
\left(1+\left|\frac{\alpha_n}{\alpha_0}\right|\sqrt{1+\suml_{k=2}^{n-1}\left|\frac{\alpha_k}{\alpha_n}\right|^2}\right)^2\right]^{1/2}.
$$
\end{lemma}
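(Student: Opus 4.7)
The plan is to bound the moduli of zeros of $P(z)$ by interpreting them as eigenvalues of a companion matrix and applying numerical-range inequalities. Since a lower bound $|z|\ge V^{-1}$ on the zeros of $P(z)=\sum_{k=0}^n \alpha_k z^k$ is equivalent to an upper bound $|w|\le V$ on the zeros of the reciprocal polynomial $Q(w)=w^n P(1/w)=\sum_{k=0}^n \alpha_{n-k}w^k$, I would work with the Frobenius companion matrix of $Q/\alpha_0$:
$$
C = S + e_1 b^{T},\qquad b=-\frac{1}{\alpha_0}(\alpha_1,\alpha_2,\ldots,\alpha_n)^{T},
$$
where $S$ is the $n\times n$ shift matrix (ones on one off-diagonal). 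The eigenvalues of $C$ are precisely the zeros of $Q$, and any eigenvalue $\lambda$ satisfies $|\lambda|\le w(C)\le w(S)+w(e_1b^{T})$ by the subadditivity of the numerical radius $w(\cdot)$.

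For the bound involving $V_1$ I would invoke the Haagerup--de la Harpe identity $w(S)=\cos\frac{\pi}{n+1}$ together with the explicit rank-one formula $w(xy^{*})=\tfrac12(|y^{*}x|+\|x\|\,\|y\|)$. Applied with $x=e_1$ and $y=\overline{b}$ this gives
$$
w(e_1 b^{T})=\tfrac{1}{2|\alpha_0|}\Bigl(|\alpha_1|+\sqrt{|\alpha_1|^{2}+\cdots+|\alpha_n|^{2}}\Bigr),
$$
and rewriting the square root as $|\alpha_n|\sqrt{1+\sum_{k=1}^{n-1}|\alpha_k/\alpha_n|^{2}}$ reproduces the second summand of $V_1$ exactly.

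For the bound involving $V_2$ I would peel off the $(1,1)$-entry of $C$ and treat it as a $2\times 2$ operator-valued block matrix: a scalar block equal to $-\alpha_1/\alpha_0$, a $1\times(n-1)$ row encoding the coefficients $\alpha_2,\ldots,\alpha_n$, an $(n-1)\times 1$ column equal to $e_1$, and the $(n-1)\times(n-1)$ shift $S'$ of numerical radius $w(S')=\cos\frac{\pi}{n}$. Any eigenvalue of the full matrix then obeys a $2\times 2$ numerical-range inequality; the largest root of the corresponding quadratic, with diagonal entries $|\alpha_1/\alpha_0|$ and $\cos\frac{\pi}{n}$ and off-diagonal contribution controlled via the remaining coefficients, is precisely the given expression for $V_2$. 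Taking the two upper bounds together and reciprocating yields $|z|\ge\max\{V_1^{-1},V_2^{-1}\}$.

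The main obstacle is the bookkeeping for $V_2$: one must choose the $2\times 2$ block decomposition so that the rank-one contribution inside the shift block produces exactly the factor $1+|\alpha_n/\alpha_0|\sqrt{1+\sum_{k=2}^{n-1}|\alpha_k/\alpha_n|^{2}}$ as the off-diagonal term of the quadratic. Proving that this constant is sharp, rather than an artefact of a sub-optimal Cauchy--Schwarz step on the associated resolvent identity $(\lambda I-S')^{-1}e_1$, is the delicate computational part; the rest reduces to routine application of subadditivity of $w(\cdot)$ and the Haagerup--de la Harpe numerical-radius formulas.
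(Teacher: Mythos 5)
Your argument is correct and is essentially the proof the paper implicitly relies on: the paper gives no proof of Lemma \ref{lem_zpol_4} but attributes it to H.~Linden, and Linden's bounds are derived exactly by your scheme --- pass to the reciprocal polynomial, write its companion matrix as $S+e_1b^{T}$, use the Haagerup--de la Harpe value $w(S)=\cos\frac{\pi}{n+1}$ together with the exact rank-one numerical-radius formula to obtain $V_1$, and use the $2\times 2$ block decomposition with $w(S')=\cos\frac{\pi}{n}$ to obtain $V_2$. The one point you flag as delicate --- sharpness of the off-diagonal constant in the $V_2$ quadratic --- is not actually required: that constant arises from the standard estimate that the numerical radius of a $2\times 2$ block matrix with vanishing diagonal blocks is at most half the sum of the norms of its two off-diagonal blocks, and the lemma asserts only an inequality, not optimality.
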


%

By combining the estimates given by Theorem \ref{tShur} or Lemmas \ref{lem_zpol_1} -- \ref{lem_zpol_4} with Theorem \ref{thmNCExist} we obtain new sufficient conditions for the existence and uniquenesses of the solution to \eqref{bp1}. 

\begin{theorem}\label{thmSufCond}
Assume that  $A$, $f(t)$ and $u_0$ satisfy the conditions of Theorem \ref{thmNCExist}. 
The generalized solution \eqref{bp1IntRed} of nonlocal problem \eqref{bp1} exists if either of the following is true.
\begin{enumerate}
	\item $\exists\ z > 1$, which along with the coefficients of  $P(\varphi(\rho) z)$ from (\ref{zerosPol}) satisfies the conditions of Theorem \ref{tShur} or at least one of Lemmas \ref{lem_zpol_1} -- \ref{lem_zpol_4}.
	\item $\exists\ z > 1$, which along with the coefficients of $P_1(z)$ from (\ref{zerosPol1}) satisfies the conditions of Theorem \ref{tShur} or at least one of Lemmas \ref{lem_zpol_1} -- \ref{lem_zpol_4}.
	\item $\exists\ z > r$, which along with the coefficients of $P_2(z)$ from (\ref{zerosPol2}) satisfies the conditions of at least one of Lemmas \ref{lem_zpol_1} -- \ref{lem_zpol_4}.
\end{enumerate}
\end{theorem}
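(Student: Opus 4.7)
The plan is to chain together three reductions already established in the paper and then invoke the polynomial zero-location results.

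First, by Theorem \ref{thmNCExist} the generalized solution \eqref{bp1IntRed} exists if and only if $\Ker(B(z))\subset\C\setminus\Sigma$. Because $B(z)$ is $2\pi Qi$-periodic, it is enough to establish this inclusion inside the fundamental horizontal strip $D_Q$, i.e.\ to show $\Ker(B(z))\cap D_Q\subset D_Q\setminus\Omega_Q$. The conformal substitution \eqref{bp_transf1} then carries $B(z)$ onto $P(w)$ (with $w=\varphi(z)$) and maps $\Omega_Q$ bijectively onto $\Phi$; consequently the desired inclusion is equivalent to saying that every zero of $P$ lies in $\C\setminus\Phi$.

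Second, I would use the two covering disks constructed in Section \ref{sec2}. For any $z=\rho+re^{i\psi}\in\Omega_Q$ one has $|\varphi(z)|=\exp(-(\rho+r\cos\psi)/Q)\leq e^{-\rho/Q}=\varphi(\rho)$, so $\Phi$ lies in the origin-centred closed disk of radius $\varphi(\rho)$. By the construction surrounding \eqref{eq_maxdist}, $\Phi$ also lies inside the encompassing disk $\{|w-O_1|\leq r\}$. Hence it suffices to prove that every zero of $P$ lies outside one of these two disks.

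Third, each of the three cases is a straightforward translation of the disk in question to a normalised form that matches one of the available zero-free tests. In part (1) the substitution $w=\varphi(\rho)z$ turns the origin-centred disk of radius $\varphi(\rho)$ into the unit disk, so that the hypothesis becomes exactly the statement that every zero of the polynomial $P(\varphi(\rho)z)$ in the variable $z$ satisfies $|z|\geq z_*>1$ for some $z_*$; the existence of such a $z_*$ follows from either Theorem \ref{tShur} (Schur--Cohn) applied to $P(\varphi(\rho)z)$ or from the radius bounds of Lemmas \ref{lem_zpol_1}--\ref{lem_zpol_4}. Part (2) is the same argument applied to $P_1(z')=P(O_1+rz')$, whose zeros lie outside $|z'|\leq 1$ precisely when the zeros of $P$ lie outside $\{|w-O_1|\leq r\}$. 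In part (3) one instead uses the pure translation $P_2(z'')=P(O_1+z'')$, for which the relevant zero-free region is $|z''|>r$; since the Schur--Cohn test is tailored to the unit circle, only Lemmas \ref{lem_zpol_1}--\ref{lem_zpol_4} (which deliver explicit radius estimates depending on the coefficients of $P_2$) may be invoked here, as the theorem statement already reflects.

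The only genuine technical content of this proof lies upstream, in verifying that $\{|w-O_1|\leq r\}$ does cover $\Phi$; once this geometric fact from Section \ref{sec2} is accepted, the argument is essentially bookkeeping that transports each standard polynomial criterion onto the appropriate covering disk. The main obstacle I would expect in writing this up carefully is being precise about the strict versus non-strict inequality in the phrase ``$\exists\,z>1$'': for the Schur--Cohn test one reads it as the pass/fail condition \eqref{shurCrit}, whereas for Lemmas \ref{lem_zpol_1}--\ref{lem_zpol_4} one reads it as the produced lower bound being strictly greater than $1$ (respectively $r$ in part (3)). Making this reading uniform in all four subcases is where I would spend the most care.
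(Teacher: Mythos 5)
Your proposal is correct and follows essentially the same route as the paper, whose own proof is a single sentence asserting that the hypotheses force all zeros of $B(z)$ outside $\Phi$ and then invoking Theorem \ref{thmNCExist}; you simply make explicit the chain (periodicity reduction to $D_Q$, the conformal substitution \eqref{bp_transf1}, and the two covering disks of radii $\varphi(\rho)$ and $r$) that the paper leaves implicit in Sections \ref{sec2}--\ref{sec3}. Your verification that $|\varphi(z)|\leq e^{-\rho/Q}$ on $\Omega_Q$, justifying the origin-centred disk used in proposition 1, is a worthwhile detail the paper omits.
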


\begin{proof}
The results formulated in Theorem \ref{tShur} and Lemmas \ref{lem_zpol_1} -- \ref{lem_zpol_4} guaranty that as soon as at least one of three theorem's prepositions is valid all zeros of $B(z)$  will lie outside $\Phi$. Hence according to Theorem \ref{thmNCExist} the solution of \eqref{bp1} exists and is unique.
\end{proof}
Propositions 1-3 of Theorem  \ref{thmSufCond} are ordered in such a way that the first proposition deals with a circle obtained by putting $\theta = \pi/2$. It is therefore valid for $A$ with any $\theta \in (0;\pi/2]$. 
The other two propositions use the parameters of encompassing circle defined above. These propositions will lead to more general sufficient conditions for $\theta <\pi/2$.  
To illustrate these facts we apply different proposition stated in Theorem \ref{thmSufCond} to some concrete examples of nonlocal conditions.
%
%
 
\begin{example}
Let us again consider nonlocal problem (\ref{bp1}) with operator coefficient $A$ ($\theta=\theta_0, \rho=0$) and the Bicadze-Samarskii--type nonlocal condition 
\begin{equation}\label{eq:nc_ex2}
u(0)+\alpha_1 u(t_1)=\alpha_2 u(t_2).
\end{equation}
As we have mentioned in Example \ref{exBS} the set $\Ker(B(z))$ can not be found in a closed form. 

Estimate (\ref{estLiang2002}) yields:
$$
|\alpha_1|+|\alpha_2|<1. 
$$   
%
Meanwhile the application of proposition 1 from Theorem \ref{thmSufCond} together with Schur-Cohn algorithm with $\theta_0=\pi/2$  lead us to system of inequalities:
\begin{equation}\label{eqintshura2}
\left\{
\begin{array}{l}
	|\alpha_2|<1,\\
	|1-\alpha_2^2|>|\alpha_1(1-\alpha_2)|.
\end{array}\right.
\end{equation}
Here we assumed that $t_1=1,\ t_2=2$ and $Q=1$.
Solution of \eqref{eqintshura2} are graphically compared to with set of pairs $(\alpha_1, \alpha_2)$ satisfying \eqref{estLiang2002} in Figure \ref{picShura2} a).
\begin{figure}[ht]
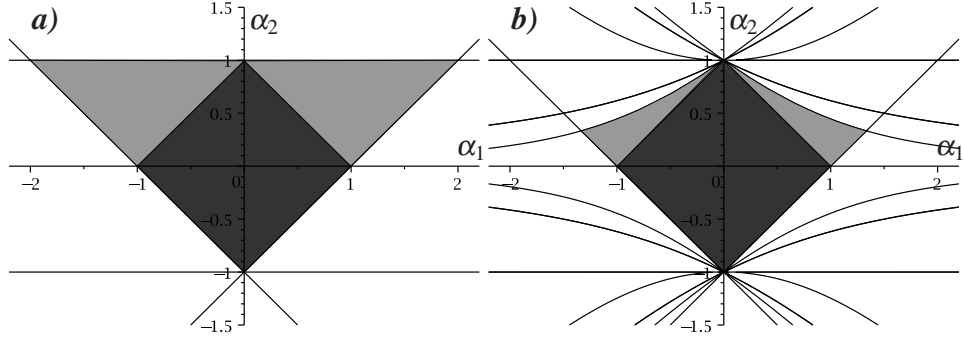
%
\begin{tabular}{c}
		\begin{overpic}[width=0.49\linewidth]%
		{pic_tshura_n=2_filled}
			\put(5,65){\textbf{a)}}
			\put(95,38){\small $\alpha_1$}
			\put(51,65){\small $\alpha_2$}
		\end{overpic}
		\begin{overpic}[width=0.49\linewidth]%
		{pic_tshura_n=6_filled}
			\put(5,65){\textbf{b)}}
			\put(95,38){\small $\alpha_1$}
			\put(51,65){\small $\alpha_2$}
		\end{overpic}
				\\
\end{tabular}
\caption{The admissible sets of $\alpha_{1,2} \in \R$ from \eqref{eq:nc_ex2} using condition \eqref{estLiang2002} (coloured in \cbox{gray80}) and  proposition 1 from Theorem \ref{thmSufCond} $\rho=0$, $\forall \theta_0 \in [0, \pi/2)$ (coloured in \cbox{gray40}), where:
\textbf{a)} $t_1=1, t_2=2$; 
\textbf{b)} $t_1=3, t_2=8$.}
\label{picShura2}
\end{figure}
Our approach apparently gives more general conditions than \eqref{estLiang2002} or its particular case from \cite{Byszewski1992} even though we made sufficient conditions \eqref{eqintshura2} independent on $\theta_0$.
To compare different conditions quantitatively we can weight the areas of corresponding admissible parameters sets against each other. 
In that sense the area of admissible parameters set obtained by application of Proposition 1, Theorem \ref{thmSufCond} for $t_1=1,\ t_2=2$ and $Q=1$ is twice bigger than the set obtained by the application of \eqref{estLiang2002}.
The number of inequalities in \eqref{eqintshura2} obtained from \eqref{shurCrit} depends on the ratio $t_2/t_1$ from the nonlocal condition. So it will grow if we increase $t_2/t_1$. 
As a result the corresponding admissible parameters set defined by \eqref{shurCrit} is going to shrink in size and in the limit $t_2/t_1\rightarrow\infty$ will become equal to the set defined by \eqref{estLiang2002}.
To illustrate this behaviour we provide (see Figure \ref{picShura2} b) the same comparison of admissible parameter sets as in Figure \ref{picShura2} a) but for the case $t_1=3,t_2=8$. Recall that all these results are valid  for any sectorial operator $A$ with $\rho=0$.

Proposition 2 of Theorem \ref{thmSufCond} ought to be more advantageous for operator coefficients with some fixed $\theta<\pi/2$. Let us fix $\theta_0 =\pi/3$ and calculate the centre and the radius of circumcircle.
We get 
$$
O_1 = 0.3950734246 , \quad 
r = 0.6049265754.
$$
By substituting  these parameters into \eqref{zerosPol1} and \eqref{zerosPol2} we produce two alternative forms of $P(z)$
$$
\begin{array}{ll}
P_1(z') &\approx 0.37 \alpha_2 {z'}^2+(0.6 \alpha_1+0.48 \alpha_2) z'+1+0.4 \alpha_1+0.16 \alpha_2 , \\
P_2(z'') &\approx \alpha_2 {z''}^2+(\alpha_1+0.79 \alpha_2) {z''}+1+0.4 \alpha_1+0.16 \alpha_2
\end{array}
$$
for the given nonlocal condition.

Application of proposition 2 of Theorem \ref{thmSufCond} along with \eqref{shurCrit} (setting $t_1=1,\ t_2=2$ as before) gives us the set of admissible $(\alpha_1,\alpha_2)$ depicted in Figure \ref{picShura2Pzp1} a). 
\begin{figure}[ht]
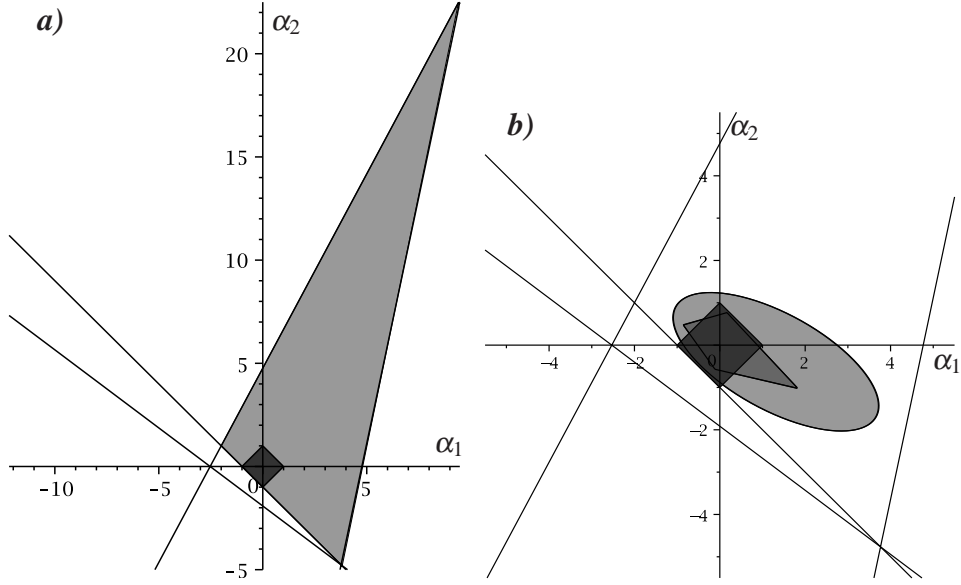
%
\begin{tabular}{lr}
		\begin{overpic}[width=0.47\linewidth]%
		{pic_tshura_n=2_pzp_filled}
			\put(5,95){\textbf{a)}}
			\put(73,22){\small $\alpha_1$}
			\put(45,95){\small $\alpha_2$}
		\end{overpic}&
		\begin{overpic}[width=0.49\linewidth]%
		{pic_tshura_n=2_pzp_z1_z2_filled}
			\put(5,95){\textbf{b)}}
			\put(95,45){\small $\alpha_1$}
			\put(52,95){\small $\alpha_2$}
		\end{overpic}
				\\
\end{tabular}
\caption{
Admissible values of parameters $\alpha_{1,2} \in \R$ from nonlocal condition \eqref{eq:nc_ex2}  $\rho=0$, $\theta = \pi/3$ obtained via estimate \eqref{estLiang2002} (coloured in \cbox{gray80}) and:
\textbf{a)}  proposition 2 from Theorem \ref{thmSufCond} and estimate \eqref{shurCrit} coloured in \cbox{gray40};
\textbf{b)} proposition 3 of Theorem \ref{thmSufCond} based on Lemma \ref{lem_zpol_1} (coloured in \cbox{gray60}) and Lemma \ref{lem_zpol_2} (coloured in \cbox{gray30}).
}
\label{picShura2Pzp1}
\end{figure}
\FloatBarrier
One can see that this set contains both admissible parameters sets obtained from proposition 1 of the same theorem and condition \eqref{estLiang2002}. 
The area of the admissible parameters set resulting from Schur-Kohn test has grown considerably comparing to Figure \ref{picShura2}. 
Similarly to Example \ref{ex1} this grow is caused by the usage of smaller $\theta_0=\pi/3$, because smaller $\theta_0$ leads to the region $\Phi$ with a smaller diameter. 

All applications of Theorem \ref{thmSufCond} demonstrated hitherto are sensitive to the values $t_i$ from nonlocal condition. For situation where such sensitivity is unfavourable one may wish to use zero bounds from Lemmas \ref{lem_zpol_1} -- \ref{lem_zpol_4} instead of the Schur-Cohn test in propositions of Theorem \ref{thmSufCond}.
Graphical comparisons of the admissible parameters sets given by these Lemmas are presented on figures \ref{picShura2Pzp1} b) trough \ref{picShura2Pzp2} b). 
To make graphical comparisons more straightforward we kept the boundary of the set obtained by application of proposition 2 from Theorem \ref{thmSufCond} (see figure \ref{picShura2Pzp1} a) on all mentioned figures. 
\begin{figure}[ht]%
\begin{tabular}{lr}
		\begin{overpic}[width=0.49\linewidth]%
		{pic_tshura_n=2_pzp_z1_z3_filled}
			\put(5,95){\textbf{a)}}
			\put(95,45){\small $\alpha_1$}
			\put(52,95){\small $\alpha_2$}
		\end{overpic}&
		\begin{overpic}[width=0.49\linewidth]%
		{pic_tshura_n=2_pzp_z2_z4_filled}
			\put(5,95){\textbf{b)}}
			\put(95,45){\small $\alpha_1$}
			\put(52,95){\small $\alpha_2$}
		\end{overpic}
				\\
\end{tabular}
\caption{
Admissible values of parameters $\alpha_{1,2} \in \R$ from nonlocal condition \eqref{eq:nc_ex2} $\rho=0, \theta=\pi/3$ obtained via estimate \eqref{estLiang2002} (coloured in \cbox{gray80}) and proposition 3 of Theorem \ref{thmSufCond} based on: 
\textbf{a)} the estimate from Lemma \ref{lem_zpol_1} coloured in \cbox{gray60} and the estimate from Lemma \ref{lem_zpol_3} coloured in \cbox{gray40}
\textbf{b)} the estimate from Lemma \ref{lem_zpol_2} coloured in \cbox{gray60} and the estimate from Lemma \ref{lem_zpol_4} coloured in \cbox{gray40} 
}
\label{picShura2Pzp2}
\end{figure}

One thing the reader would immediately note is that the dominance of necessary conditions presented in this work over the estimate \eqref{estLiang2002} is no longer absolute.  
The sets obtained from Lemmas \ref{lem_zpol_1} -- \ref{lem_zpol_3} do not fully cover the set given by \eqref{estLiang2002}. But still any of these Lemmas performs better than condition \eqref{estLiang2002} in terms of the area in the parameters space.
The same is true for Lemma \ref{lem_zpol_4} which lead to the set having the area at least six times bigger than the area coloured in \cbox{gray80}.  In addition to that the condition given by Lemma \ref{lem_zpol_4} clearly generalize \eqref{estLiang2002}. 
This dominance of Lemma \ref{lem_zpol_4} over the other necessary conditions may not always be valid because the mutual  interdependence of root estimates from  lemmas \ref{lem_zpol_1} -- \ref{lem_zpol_4} are not yet known. 
Therefore, they are advised to be used jointly. 
 
\end{example}
\FloatBarrier
A priori estimates for the zero-free circle mentioned in Lemmas \ref{lem_zpol_1} -- \ref{lem_zpol_4} have been derived by inversion of the corresponding polynomial zero bounds. These particular zero bounds have been selected among others from \cite{Milovanovic2000} \cite{Milovanovic2000a}  based on the numerical comparison of its performance for a number of nonlocal conditions. 
More detailed discussion about various root finding methods and their potential in application to theorems \ref{thmNCExist} and \ref{thmSufCond} are given in \cite{PhdThSytnykEN}.
In this work it is also described how to generalize the results of section \ref{sec3} to the case when some of $t_i$ are irrational. All codes for the generation of figures, circle parameters calculation and numerical checks of the presented necessary conditions can be found at\newline \url{imath.kiev.ua/~sytnik/research/works/nonlocal-2014}.

\section{Conclusions and future work}
By exploiting the connection between nonlocal evolution problem \eqref{bp1} and its classical counterpart we derive the reduction operator representation \eqref{reprDelta}. 
It enabled us to work out the conditions for existence of the mild solution to \eqref{bp1}.
Analogous existence analysis is possible for other evolution problems  as long as the exact representation of $B(z)$ similar to \eqref{zerosExp} is obtainable and one can characterize the set free from zeros of $B(z)$.  The nonlocal evolutional problems for the abstract time dependant Schro\"dinder equation and the abstract second order linear differential equation are both tractable by our approach. They will constitute the subject for our future analysis.  
\selectlanguage{english}

\end{document}